\theoremstyle{plain}
\newtheorem{theorem}{Theorem}[section]
\newtheorem{lemma}[theorem]{Lemma}
\newtheorem*{cor*}{Corollary}
\theoremstyle{remark}
\newtheorem*{remarks*}{Remarks}
\newtheorem*{notedef*}{Notation and definitions}
\newtheorem*{notedefr*}{Notation and definitions regarding stabilizers
and supports}
\newcommand{\notarrow}{\kern .42em\not\kern -.42em\longrightarrow}
\newcommand{\sq}[1]{\ensuremath{\langle #1 \rangle}}
\newcommand{\setof}[2]{\{\,#1\,:\,#2\,\}}
\newcommand{\setmin}{\smallsetminus}
\newcommand{\forces}{\Vdash}
\newcommand{\QQ}{\mathbf{Q}}
\newcommand{\FF}{\mathbb{F}}
\newcommand{\NN}{\mathcal{N}}
\newcommand{\MM}{\mathcal{M}}
\newcommand{\II}{\mathcal{I}}
\newcommand{\statepc}[1]{PC($\aleph_0$,$#1$)}
\newcommand{\statec}[1]{C($\aleph_0$,$#1$)}
\newcommand{\pr}{\mathrm{pr}}
\newcommand{\dd}{d}
\newcommand{\logp}{\mathrm{log}_{*p}}
\DeclareMathOperator{\Orb}{{\rm Orb}}
\DeclareMathOperator{\Aut}{{\rm Aut}}
\DeclareMathOperator{\Dom}{{\rm Dom}}
\DeclareMathOperator{\Ran}{{\rm Range}}
\DeclareMathOperator{\Span}{{\rm Span}}
\begin{document}
\date{}
 
\title{Partial choice functions for families of finite sets}

\author{
Eric J.~Hall\\
\and
Saharon Shelah\footnote{The second author's research was supported by
the US-Israel Binational Science Foundation.
Publication  934.}
}

\maketitle
{
\renewcommand\thefootnote{}
\footnotetext{Research partially supported by NSF grant
No.\ NSF-DMS 0600940.}
\footnotetext{2000 \textit{Mathematics Subject Classification}:\ 03E25 (03E05, 15A03).}
}

\begin{abstract}
Let $p$ be a prime.
We show that ZF $+$ ``Every countable
set of $p$-element sets has an infinite partial choice function''
is not strong enough to prove that every countable set of $p$-element
sets has a choice function, answering an open question from
\cite{DHHKR08}.
The independence result is obtained by way of a permutation
(Fraenkel-Mostowski) model in which the set of atoms
has the structure of a vector space over the field of $p$
elements, and then the use of atoms is eliminated
by citing an embedding theorem of Pincus.
By way of comparison, some
simpler permutation models are considered in which
some countable families of $p$-element sets fail to have
infinite partial choice functions.
\end{abstract}

\begin{section}{Introduction} \label{intro}

Let \statec{n}\ be the statement asserting that every
infinite, countable set of $n$-element sets has a choice function.
Let \statepc{n}\ be the statement asserting that every
infinite, countable set $C$ of $n$-element sets
has an infinite partial choice function.
That is, \statepc{n}\  asserts that there is
a choice function whose domain is an infinite subset of $C$.
(Recall \statec{n}\ is Form 288($n$), and
\statepc{n}\ is Form 373($n$) in Howard and Rubin's reference
\cite{HR98}.  Also, \statec{2} is Form 30, and \statepc{2} is Form 18.)
The main result of this paper is that for any prime $p$, 
\statepc{p}\ does not imply \statec{p}\ in ZF.  This answers 
questions left open in 
\cite{DHHKR08}.

The independence results are obtained using the technique of
permutation models (also known as Fraenkel-Mostowski models).  
See Jech \cite{Jech73} for basics about permutation models
and the theory ZFA (ZF modified
to allow atoms).
A suitable permutation model will establish 
the independence of 
\statec{p}\ from \statepc{p}\ in the context of ZFA.
This suffices by work of Pincus \cite{Pincus72}
(extending work of Jech and Sochor), which shows that
once established under ZFA, the independence result transfers to
the context of ZF
(this is because the statement \statepc{p}\ is ``injectively boundable''; 
see also Note 103 in \cite{HR98}).

Section~\ref{main} is the proof of the main result,
Theorem~\ref{theorem:main}.  
Readers with some experience with permutation models may 
wonder whether 
the model used in the proof of Theorem~\ref{theorem:main}
is unnecessarily complicated. 
Section~\ref{simpler}
explains why certain simpler models which may appear promising
candidates to
witness the independence of \statepc{2}\ from \statec{2}\ in
fact fail to do so.

\end{section}
\begin{section}{The main theorem} \label{main}

\begin{theorem}\label{theorem:main}
Let $p$ be a prime integer.
In ZF, \statepc{p}\ does not imply \statec{p}\
\end{theorem}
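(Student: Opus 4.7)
The plan is to build a Fraenkel-Mostowski model $\mathcal{N}$ of ZFA in which \statepc{p}\ holds while \statec{p}\ fails, and then to invoke the embedding theorem of Pincus (injective boundability of \statepc{p}; Note 103 in \cite{HR98}) to lift the independence to ZF.

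For the model, I would take the set of atoms $A$ to carry the structure of an $\aleph_0$-dimensional vector space $V$ over $\FF_p$, with $G$ acting on $A$ as a suitable group of $\FF_p$-linear automorphisms (essentially $GL(V)$, perhaps enlarged or refined by translations or decomposition-preserving symmetries), and the normal filter $\mathcal{F}$ generated by pointwise stabilizers attached to finite-dimensional subspaces of $V$. The model $\mathcal{N}$ is then the class of hereditarily $\mathcal{F}$-symmetric sets built from $A$; supports in $\mathcal{N}$ are, in effect, finite-dimensional subspaces of $V$. To witness the failure of \statec{p}\ in $\mathcal{N}$, I would exhibit an explicit countable family $\mathcal{B} = \{B_n : n \in \omega\}$ of $p$-element sets built from the linear structure of $V$, with the enumeration itself symmetric so that $\mathcal{B}$ is countable in $\mathcal{N}$; for any candidate choice function $c$ supported by a finite-dimensional $E$, the stabilizer $G_E$ acts transitively on $B_n$ for cofinitely many $n$, preventing $c(B_n)$ from being $G_E$-invariant -- so no choice function can exist in $\mathcal{N}$.

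The key technical step is the verification of \statepc{p}\ in $\mathcal{N}$. Given an arbitrary countable family $\{C_n : n \in \omega\} \in \mathcal{N}$ of $p$-element sets with support a finite-dimensional $E_0 \leq V$, each $C_n$ also has support $\subseteq E_0$, and since $p$ is prime the orbits of $G_{E_0}$ on the $p$ elements of $C_n$ have size $1$ or $p$: either all elements of $C_n$ are individually $G_{E_0}$-fixed, or they form a single orbit of size $p$. Exploiting the $\FF_p$-linear structure of $V$ -- in particular, classifying size-$p$ orbits of $G_{E_0}$ via $\FF_p$-linear data on the quotient $V/E_0$ -- I would produce a single finite-dimensional extension $E_1 \supseteq E_0$ whose stabilizer $G_{E_1}$ fixes an element of $C_n$ for infinitely many $n$, giving the desired infinite partial choice function.

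The principal obstacle is exactly this verification of \statepc{p}: Section \ref{simpler} of the paper exhibits natural but simpler FM models in which \statepc{p}\ fails, so the construction must exploit the $\FF_p$-vector-space structure of $A$ in a genuinely non-trivial way to secure partial choices uniformly for all countable symmetric families of $p$-element sets.
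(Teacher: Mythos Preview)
Your outline has the right overall architecture (FM model with vector-space atoms, then Pincus transfer), but the specific model you describe will not work, and the proposal misses the paper's central idea. You take supports to be finite-dimensional subspaces and then assert that, given a countable family $\{C_n\}$ with support $E_0$, you can find a single finite-dimensional $E_1\supseteq E_0$ such that $G_{E_1}$ fixes an element of $C_n$ for infinitely many $n$. This step is unjustified and in general fails. Already for the most obvious family---the partition $\mathcal P$ of the paper, where $C_n$ is the block indexed by the $n$-th standard basis vector $e_n$---a representative of $C_n$ needs $e_n$ in its support, the $e_n$ are linearly independent, and no finite-dimensional $E_1$ contains infinitely many of them. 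Your hoped-for pigeonhole on ``$\FF_p$-linear data on $V/E_0$'' cannot succeed, because that data genuinely ranges over an infinite set.

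The paper's resolution is precisely to \emph{abandon} finite supports. The group is taken to be the abelian group $G=\prod_{i}\FF_p$ (not $GL(V)$), acting on $U=\FF_p\times W$ so as to preserve the partition $\mathcal P$, and, crucially, the support ideal $\II$ consists of certain \emph{infinite} ``thin'' subsets of $W$, engineered so that ($*$1) every infinite subset of $W$ contains an infinite member of $\II$, and ($*$2) $\II$ is closed under span. Lemma~\ref{lemma:singletonsupport} (whose proof is where primality of $p$ is actually used) shows that any element of a $p$-element set supported by $A$ is already supported by $A\cup\{b\}$ for a \emph{single} $b\in W$; then, choosing $x_n\in C_n$ with support $A\cup\{s_n\}$, property ($*$1) supplies an infinite $J$ with $\{s_n:n\in J\}\in\II$, so $A\cup\{s_n:n\in J\}$ is a legitimate (infinite) support for the partial choice $\langle x_n\rangle_{n\in J}$. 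Thus \statepc{p}\ holds not because a finite support suffices, but because the ideal is rich enough to absorb infinitely many one-point extensions while remaining proper (so that \statec{p}\ still fails on $\mathcal P$). This ideal, together with the singleton-support reduction, is the idea missing from your sketch.
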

\begin{proof}
As discussed in the
Introduction, it suffices describe a
permutation model in which \statepc{p}\ holds
and \statec{p}\ fails.  
Let $\MM$ be a model of ZFAC whose set of atoms is countable and infinite;
we will work in $\MM$ unless otherwise specified.  
We will describe a permutation submodel 
of $\MM$.

First, we set some notation for a few vector spaces over 
the field $\FF_p$ with $p$ elements.
Let $W=\oplus_{i=1}^\infty \FF_p$, so each
element of $W$ is a sequence $w=(w_1,w_2,w_3,\dots)$ of elements
of $\FF_p$, with
at most finitely many nonzero terms.  Let
$G$ be the full product $\otimes_{i=1}^\infty \FF_p$ (sequences may have
infinitely many nonzero elements).  Finally, let $U=\FF_p\times W$,
so each element
of $U$ is a pair $(a,w)$ with $a\in\FF_p$, $w\in W$.

For each $w\in W$, let $U_w=\setof{(a,w)}{a\in\FF_p}$, so that
$\mathcal P=\setof{U_w}{w\in W}$ is a partition of $U$ into 
sets of size $p$.  We define an action such that each $g\in G$ gives
an automorphism of $U$, and such that the $G$-orbits are
the elements of the partition $\mathcal P$, as follows.
For each $(a,w)\in U$ and $g\in G$, let
\[
(a,w)g = (a+\sum_{i=1}^\infty w_ig_i, w)
\]
(where $w_i$ is the $i^{\mathrm{th}}$ entry in the sequence $w$, and
likewise $g_i$; 
the product $w_ig_i$ is in the field $\FF_p$, and the
sum $a+\sum_i w_ig_i$ is a (finite) sum in $\FF_p$).
This action induces an isomorphism of $G$ with
a subgroup of $\Aut(U)$; 
we will henceforth identify $G$ with this subgroup,
think of the operation on $G$ as composition instead of addition, and
continue to let $G$ act on the right.

\begin{remarks*}
It is clear from the original definition of $G$ that $G$ is
abelian, and all its non-identity elements have order $p$.  
As a subgroup of $\Aut(U)$, $G$ may be characterized as the 
group of all automorphisms of $U$ which act on each element of the
partition $\mathcal P$ and have order $p$ or $1$.  Equivalently, $G$ is the
group of all automorphisms of $U$ which act on each element of
$\mathcal P$ and act trivially on $U_{\mathbf{0}}$.
\end{remarks*}

Now, 
identify the set of atoms in $\MM$ with the vector space $U$.
Thus, we think of each $g$ in 
$G$ as a permutation of the set of atoms.  Each permutation of $U$
extends uniquely to an automorphism of $\MM$, and so we will also think
of $G$ as a subgroup of $\Aut(\MM)$.  

Let $\II$ be a (proper) ideal on $W$ such that
\begin{enumerate}
\setlength{\itemsep}{0ex}
\item[($*$1)] every infinite subset
of $W$ contains an infinite member of $\II$, and
\item[($*$2)] 
$A\in\mathcal I\Rightarrow\Span(A)\in\II$,
\end{enumerate}
where $\Span(A)$ is the $\FF_p$-vector subspace of $U$ 
generated by $A$. 
For proof of the existence of
such an ideal,  see Lemma~\ref{lemma:ideal}.

\begin{notedefr*}
For $A\subset W$ and $g\in G\subset\Aut(\MM)$, we say 
``$g$ \emph{fixes at} $A$'' if $g$ fixes each atom in
$\FF_p\times A=\bigcup_{w\in A}U_w$.  
Let $G_{(A)}$ denote the subgroup of $G$ consisting of elements which
fix at $A$ (i.e., $G_{(A)}$ is the pointwise stabilizer of 
$\bigcup_{w\in A}U_w$).  When $A=\{a_1,\dots,a_n\}$ is finite,
we may write $G_{(a_1,\dots,a_n)}$ for
$G_{(\{a_1,\dots,a_n\})}$.
If $G'$ is a subgroup of $G$, then $G'_{(A)}=G'\cap G_{(A)}$.
For $x\in\MM$, we say that
$A$ \emph{supports} $x$ if $xg=g$ for each $g\in G$ which fixes at $A$,
and $x$ is \emph{symmetric} if $x$ has a support which is 
a member of $\II$.
\end{notedefr*}

Let $\NN$ be the permutation model consisting
of hereditarily symmetric elements of $\MM$.
Note that the empty set supports the partition $\mathcal P$
of $U$ described above, and also supports any well-ordering of $\mathcal P$
in $\MM$.  So in $\NN$, $\mathcal P$ is a
countable partition of the set $U$ of atoms
into sets of size $p$.  
However, no choice function for 
$\mathcal P$ has a support in $\II$, and so
$\NN\models\lnot\text{\statec{p}}$.

\begin{remarks*}\label{remarks:supports}
\begin{enumerate}
\item[(1)]
Note, by ($*$2) above,
that $A$ supports $x$ if and only if $\Span(A)$ supports $x$,
and thus $A$ supports $x$ if and only if any basis for $\Span(A)$ supports
$x$.
\item[(2)] Let $w\in W$.  Observe that
for any $g\in G$, $g$ fixes one
element of $U_w$ if and only if $g$ fixes each element of $U_w$, and
$G_{(w)}$ is the stabilizer subgroup of each element of $U_w$.
\end{enumerate}
\end{remarks*}

We now want to show that
$\NN\models\text{CP($\aleph_0$,$p$)}$. We first establish a couple of
lemmas about supports of elements of $\NN$.

\begin{lemma}\label{lemma:finitesupport}
Suppose $A\in\II$ and $x\in\NN$.
Either there is a finite set $B\subset W$ such that $B\cup A$ supports
$x$, or the 
$G_{(A)}$-orbit of $x$ is 
infinite.
\end{lemma}
\begin{proof}
We give a forcing argument similar to one used in Shelah \cite{Sh:273}.
We set up a notion of forcing $\QQ$ which
adds a new automorphism of $U$ like those found in $G_{(A)}$.  
Let $A^\perp$ be a subspace of $W$ complementary to $A$
(i.e., $\Span(A\cup A^\perp)=W$ and $A\cap A^\perp=\{\mathbf{0}\}$),
and fix a basis   
$\setof{w_i}{i\in\omega}$ for $A^\perp$.  
Conditions of $\QQ$ shall have the following
form:  For any $n\in\omega$ and function $f\colon n\to\FF_p$, let
$q_f$ be the unique automorphism of $\FF_p\times\Span\{w_0,\dots w_{n-1}\}
\subset U$ which fixes each $U_{w_i}$ and
maps $(0,w_i)$ to $(f(i),w_i)$.  As usual, for conditions $q_1,q_2\in\QQ$,
we let $q_1\le q_2$ iff $q_2\subseteq q_1$.  Thus, if $\Gamma\subset\QQ$
is a generic filter, then $\pi=\bigcup\Gamma$ is an automorphism of
$A^\perp$ preserving the partition $\mathcal{P}$.
Easily, $\pi$ extends uniquely
to an automorphism of $U$
fixing at $A$ and preserving the partition $\mathcal{P}$,
and thus we will think of such $\pi$ as being an automorphism of $U$.
Observe that $\QQ$ is equivalent to Cohen forcing 
(the way we have associated each condition with a finite sequence
of elements of 
$\FF_p$, it is easy to think
of $\QQ$ as just adding a Cohen generic sequence 
in ${}^\omega\FF_p$).
Let $\dot\pi$ be a canonical name for the automorphism added by $\QQ$.
Let $(\QQ_1,\dot\pi_1)$ and
$(\QQ_2,\dot\pi_2)$ each be copies of $(\QQ,\dot\pi)$.

\textsc{Case 1:} For some $(q_1,q_2)\in\QQ_1\times\QQ_2$, 
                 $(q_1,q_2)\forces \check x\dot\pi_1 = \check x\dot\pi_2$.

Let $B\subset W$ be some finite support for $q_1$;
e.g.\ $B=\setof{w\in W}{(\exists n\in\FF_p)\ (n,w)\in\Dom(q_1)\cup\Ran(q_1)}$.
Let $\Gamma\subset\QQ_1\times\QQ_2$ be
generic over $\MM$ with $(q_1,q_2)\in\Gamma$,
and let $(\pi_1,\pi_2)$ be the interpretation of
$(\dot\pi_1,\dot\pi_2)$ in $\MM[\Gamma]$.  
For any $g\in G_{(A\cup B)}$, $(g\pi_1,\pi_2)$ is another
$\QQ_1\times\QQ_2$\,-generic pair of automorphisms. 
Let $\Gamma_g\subset\QQ_1\times\QQ_2$ such that 
$(g\pi_1,\pi_2)$ is the interpretation of $(\dot\pi_1,\dot\pi_2)$
in $\MM[\Gamma_g]$.

Note that $(q_1,q_2)$ is in both $\Gamma$ and $\Gamma_g$, so
$\MM[\Gamma]\models x\pi_1=x\pi_2$, and $\MM[\Gamma_g]\models xg\pi_1=x\pi_2$.
Thus, $x\pi_1=xg\pi_1$ (if desired, one can briefly reason
in an extension which contains both $\Gamma$ and $\Gamma'$), and 
it follows that $x=xg$ (recall that 
automorphisms of $U$
which preserve $\mathcal P$,
such as $g$ and $\pi_1$,
commute).

We have shown that every $g\in G_{(A\cup B)}$ fixes $x$, which 
is to say that $A\cup B$ supports $x$, which completes the proof for Case 1.

\textsc{Case 2:} $\forces_{\QQ_1\times\QQ_2}
                 \check x\dot\pi_1 \neq \check x\dot\pi_2$.

Let $\mathcal{H}(\kappa)$ be the set of hereditarily 
of cardinality smaller than
$\kappa$ sets, where $\kappa>2^{\aleph_0}+|\mathrm{TC}(x)|$,
and let $C$ be
a countable elementary submodel of $\mathcal{H}(\kappa)$ with $x\in C$.
It is clear that there exist infinitely many elements
of $G_{(A)}$ which are mutually
$\QQ$\,-generic over $C$, and in fact there is perfect set
such elements by \cite{Sh:273} (specifically, Lemma 13, applied
to the equivalence relation $\mathcal E$ on $G_{(A)}$ defined by 
$\pi_1\mathrel{\mathcal E}\pi_2  \leftrightarrow x\pi_1=x\pi_2$).
More precisely, there is a perfect set $P\subset G_{(A)}$ such
that for each $\pi_1,\pi_2\in P$, $(\pi_1,\pi_2)$ is
$\QQ_1\times\QQ_2$\,-generic over $C$. Thus $x\pi_1\neq x\pi_2$
whenever $\pi_1,\pi_2\in P$, and 
hence, the $G_{(A)}$-orbit of $x$ is infinite.
\end{proof}

\begin{lemma}\label{lemma:singletonsupport}
Let $x\in X\in\NN$.  Let $A\in\II$ be a support for
$X$.  If $|X|=p$, then there exists $b\in W$ such that $A\cup\{b\}$ supports
$x$.
\end{lemma}

\begin{proof}
Since $G_{(A)}$ fixes $X$, the $G_{(A)}$ orbit of $x$ is contained
in $X$, and hence is finite.  By the previous lemma, there is
a finite set $B\subset W$ such that $A\cup B$ supports
$x$.  We will show that if $|B|>1$, then there is some $B'$ with
$|B'|<|B|$ such that $A\cup B'$ supports $x$; the lemma then follows
by induction.
Assume, without loss of generality, that
\[\text{$B$ is a linearly independent
set disjoint from $\Span(A)$,} \tag{$*$}
\]
and let $B=\{b_1,\dots,b_n\}$, where this is a set of $n$ distinct
elements.
Assume also that for each proper subset $C\subset B$, $A\cup C$
fails to support $x$ (otherwise we are done easily).
Then 
$G_{(A\cup C)}$ acts non-trivially on $X$ for each proper $C\subset B$,
and, since
$G_{(A\cup C)}$ is a $p$-group and $|X|=p$,
the $G_{(A\cup C)}$-orbit of $x$ must be $X$.   
Let $G'=G_{(A\cup\{b_3,\dots,b_n\})}$.
Let $H$ be the stabilizer of
$x$ in $G'$; that is, $H=\setof{g\in G'}{xg=x}$.
Then $[G':H]=|\Orb_{G'}(x)|=p$.
Note that $G'_{(b_1,b_2)}=G_{(A\cup B)}\subset H$ since $A\cup B$
supports $x$.  It suffices to find $b\in W$ such that
$G'_{(b)}\subseteq H$, for then $A\cup\{b,b_3,\dots,b_n\}$ supports
$x$, as desired.

Recall (by Remark (2) above) that $G'_{(b_1,b_2)}$ is the stabilizer
in $G'$
of any ordered pair $(u_1,u_2)\in U_{b_1}\times U_{b_2}$.  
It follows from ($*$) that there exist elements of $G$ which move the
$p$ elements of $U_{b_1}$ while fixing all elements of $U_{w}$ for
each $w\in A\cup\{b_2,b_3,\dots,b_n\}$,
and likewise with $b_1$ and $b_2$ switched.
Thus $U_{b_1}\times U_{b_2}$ itself
is the $G'$-orbit of the pair $(u_1,u_2)$, and
so
$[G':G'_{(b_1,b_2)}]=p^2$.
Therefore, $[H:G'_{(b_1,b_2)}]=p$.

Let $h\in H\setmin G'_{(b_1,b_2)}$. The natural
image of $h$ in the quotient group
$H/G'_{(b_1,b_2)}$ generates that quotient group (which has order $p$),
and therefore
$h$ generates the action of $H$ on $U_{b_1}\times U_{b_2}$.
Let $m,n\in\FF_p$ such that
\[
(0,b_1)h=(m,b_1)\quad\text{and}\quad
(0,b_2)h=(n,b_2).
\]
Note that if $m=0$ or $n=0$, then we are done easily:  Say $m=0$. 
Then $h$ fixes at $b_1$, and consequently every element of $H$ 
fixes at $b_1$, so $H\subseteq G'_{(b_1)}$.
But then $H=G'_{(b_1)}$, since both subgroups have the same index
in $G'$, and the proof is completed by taking $b=b_1$.

On the other hand, if $m$ and $n$ are both nonzero, then
we have inverses $m^{-1}$ and
$n^{-1}$ in $\FF_p$, and we let $b=m^{-1}b_1-n^{-1}b_2$.
Now we just want to show that $G'_{(b)}\subseteq H$.  But since
these two groups have the same index in $G'$, it is equivalent to show
$H\subseteq G'_{(b)}$.  Compute:
\begin{multline*}
(0,b)h=
(0,m^{-1}b_1-n^{-1}b_2)h=m^{-1}(0,b_1)h-n^{-1}(0,b_2)h= \\
m^{-1}(m,b_1)-n^{-1}(n,b_2)= 
(1,m^{-1}b_1)-(1,n^{-1}b_2)=(0,b).
\end{multline*}
Thus $h$ fixes at $\{b\}$, and so does every power of $h$.  Since
every element of $H$ acts on $b_1$ and $b_2$ like a power of $h$, it
follows that $H\subseteq G'_{(b)}$, as desired.
\end{proof}

Now, to show 
$\NN\models\text{CP($\aleph_0$,$p$)}$, let $Z=\setof{X_n}{n\in\omega}$
be a set of $p$-elements sets, with $Z$ countable in $\NN$. 
Let $A\in\II$ be a support for a well-ordering of $Z$, so that
$A$ is a support for each element of $Z$.
For each $n\in\omega$, let $x_n\in X_n$
(of course, $Z$ might not have a choice function in $\NN$, but we
are working in $\MM$).  By Lemma~\ref{lemma:singletonsupport}, 
since $|X_n|=p$,
there is some $s_n\in W$ such that
$A\cup\{s_n\}$ supports $x_n$.
Let $S=\setof{s_n}{n\in\omega}$.  
If $S$ is finite, then $A\cup S\in\II$, and $A\cup S$ is a support
for the enumeration $\langle x_n\rangle_{n\in\omega}$,
so in fact $Z$ has a choice function in $\NN$.
If $S$ is infinite, then there is an infinite $B\in\II$ such
that $B\subseteq S$; say $B=\setof{s_n}{n\in J}$.  In this case,
$A\cup B$ is a support for the enumeration 
$\langle x_n\rangle_{n\in J}$.
In either case, 
$Z$ has an infinite partial choice function in $\NN$, as desired.
\end{proof}

It remains in this section to establish the existence of an ideal on 
$W=\oplus_{i=0}^\infty\FF_p$ having the properties needed in the
proof of Theorem~\ref{theorem:main}.  

\smallskip
\noindent
\textit{Notation and definitions.}
\begin{enumerate}
\setlength{\itemsep}{0ex}
\item [1.] For $n\in\omega\setmin\{0\}$,
            let $\logp(n)$ be the least $k\in\omega$ such that
            $(\log_p)^k(n)\le 1$, where $(\log_p)^0(n)=n$ and
            $(\log_p)^{k+1}(n)=\log_p\left((\log_p)^k(n)\right)$.

 Note: In what follows, $\logp$ could be replaced by 
       $\log_{*}=\log_{*2}$ with no effect on the arguments,
       except for minor changes needed in part (2) of 
       Lemma~\ref{lemma:ideal}.
\item [2.] For convenience,
            let $\setof{e_k}{k\in\omega}$ be the ``standard basis''
            for $W=\oplus_{i=0}^\infty \FF_p$; i.e.
           \[
           e_k(i)=\begin{cases}1 & \text{if $i=k$},\\0 &\text{else.}\end{cases}
           \]
\item [3.] For $\displaystyle w=\sum_{\ell}a_\ell e_\ell\in W$,
let $\displaystyle \pr_k(w)=\sum_{\ell<k}a_\ell e_\ell$.  
\item [4.] $\dd_k(A)=|\setof{\pr_k(w)}{w\in A}|$.
\item [5.] We say $A\subset W$ is \emph{thin} if
            \[
            \lim_{k\to\infty}\frac{\logp[\dd_k(A)]}{\logp(k)}\ =\ 0.
            \]
\end{enumerate}

\begin{lemma}\label{lemma:ideal}
Let $\II$ be the set of thin subsets of $W$.  Then 
\begin{enumerate}
\item[(0)] $\II$ is an ideal on $W$,
\item[(1)] every infinite subset of $W$ contains an infinite member of
           $\II$, and  
\item[(2)] $A\in\II\Rightarrow\Span(A)\in\II$.
\end{enumerate}
\end{lemma}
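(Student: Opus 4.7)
The plan is to address the three clauses in turn: parts (0) and (2) reduce to short observations about how $\logp$ interacts with multiplication and exponentiation, while part (1) carries the main content via a pigeonhole/fusion construction.

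For part (0), downward closure is immediate from the monotonicity of $\dd_k$. For closure under binary unions, the estimate
\[
\dd_k(A\cup B)\le \dd_k(A)+\dd_k(B)\le 2\max(\dd_k(A),\dd_k(B))
\]
together with the bound $\logp(2n)\le \logp(n)+1$ (so $\logp$ absorbs multiplicative constants) gives $\logp[\dd_k(A\cup B)]/\logp(k)\to 0$ whenever $A,B\in\II$. For part (2), $\FF_p$-linearity of $\pr_k$ yields $\pr_k(\Span(A))=\Span(\pr_k(A))$, a subspace of $\FF_p^k$ spanned by at most $\dd_k(A)$ vectors and hence of size at most $p^{\dd_k(A)}$. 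Thus $\dd_k(\Span(A))\le p^{\dd_k(A)}$, so $\logp[\dd_k(\Span(A))]\le \logp[\dd_k(A)]+1$; dividing by $\logp(k)$ and letting $k\to\infty$ shows $\Span(A)\in\II$. This is exactly where the iterated logarithm in the definition of thin earns its keep.

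Part (1) is the crux. Given an infinite $S\subseteq W$, I will inductively build a sequence of distinct $a_1,a_2,\ldots\in S$, a fast-growing sequence $k_1<k_2<\cdots$, and nested infinite sets $S=T_0\supseteq T_1\supseteq\cdots$ with $a_m\in T_n$ for all $m>n$ and with $\pr_{k_n}$ constant on $T_n$. At stage $n$, with $T_{n-1}$ infinite, I pick $a_n\in T_{n-1}\setminus\{a_1,\ldots,a_{n-1}\}$, choose $k_n>k_{n-1}$ large enough that $\logp(k_n)\ge n+1$ (so $k_n$ is at least a tower of $p$'s of height $n+1$), and use that $\pr_{k_n}(T_{n-1})\subseteq\FF_p^{k_n}$ is finite to pick an infinite fiber $T_n\subseteq T_{n-1}$ on which $\pr_{k_n}$ takes a single value. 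Then for $A=\{a_m:m\ge 1\}$ and any $k\le k_n$, the projection $\pr_k$ takes at most $n+1$ values on $A$: the first $n$ terms contribute at most $n$, while for $m>n$ all $a_m$ lie in $T_n$ and hence share a common value under $\pr_{k_n}$, and so under $\pr_k$ too. Given any large $k$, taking $n$ minimal with $k\le k_n$ yields $\dd_k(A)\le n+1$ and $\logp(k)\ge\logp(k_{n-1})\ge n$, so $\logp[\dd_k(A)]/\logp(k)\le\logp(n+1)/n\to 0$, showing $A\in\II$.

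The main obstacle is part (1): one must recognize that a fusion-style pigeonhole argument is the right tool, and calibrate the tower-of-$p$'s growth of the $k_n$ against the iterated logarithm in the definition of thin so that the bound $\dd_k(A)\le n+1$ on the interval $(k_{n-1},k_n]$ translates into the required vanishing limit.
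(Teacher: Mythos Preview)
Your proof is correct and follows essentially the same approach as the paper: parts (0) and (2) are handled identically via the sub-additivity of $\logp$ under sums and exponentials, and for part (1) both arguments extract a thin subsequence by a pigeonhole/fusion construction, choosing projection levels $k_n$ growing fast enough that $\logp(k_n)\ge n$. The only cosmetic difference is that the paper first invokes K\"onig's Lemma to obtain a coordinate-wise convergent sequence and then thins it, whereas you carry out the nested-fiber fusion directly; the underlying idea and the resulting estimate $\dd_k(A)\le n+1$ on $(k_{n-1},k_n]$ are the same.
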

\begin{proof}
(0)  Clearly $\II$ is closed under subset.  Suppose $A_1$ and $A_2$
are thin, and let $A=A_1\cup A_2$.  Then 
(for any $k\in\omega$) $\dd_k(A)\le\dd_k(A_1)+\dd_k(A_2)$,
so
\[
\frac{\logp[\dd_k(A)]}{\logp(k)} \le 
\frac{\logp[\dd_k(A_1)+\dd_k(A_2)]}{\logp(k)} \le
\frac{1+\max_{i=1,2}\left(\logp[\dd_k(A_i)]\right)}{\logp(k)}.
\]
The limit as $k\to\infty$ must be $0$, so $A$ is thin.

(1)  Let $A\subseteq W$ be an infinite thin set.  By K\"onig's Lemma, 
we can find pairwise distinct $x_n\in A$ for $n\in\omega$ such that 
for each $i\in\omega$, $\sq{x_n(i)}_{n<\omega}$ is eventually constant.

Let $n_0=0$.  For $i\in\omega$, assuming $n_0,\dots n_i$ are chosen,
we can choose $n_{i+1}$ large enough so that
\begin{align*}
 & \pr_{n_i}(x_{n_{i+1}})=\pr_{n_i}(x_{n_{i+1}+t})\quad\text{for all $t\in\omega$}, \\
\text{and}\quad & \logp(n_{i+1})>i+1.
\end{align*}
Let $A^-=\setof{x_{n_i}}{i\in\omega}$.
Then $\dd_{n_i}(A^-)\le i+1$, and
\[
\lim_{i\to\infty}\frac{\logp(\dd_{n_i}(A^-))}{\logp(n_i)} \le
\lim_{i\to\infty}\frac{\logp(i+1)}{i} = 0.
\]
Therefore $A^-$ is an infinite, thin subset of $A$.

(2) 
For any $A\subset W$, observe that
\[
\dd_k(\Span A)\le p^{\dd_k(A)}.
\]
Thus 
\[
\logp(\dd_k(\Span A)) \le \logp\left(p^{\dd_k(A)}\right)\le
1+\logp(\dd_k(A)).
\]
It follows easily that if $A$ is thin, then $\Span A$ is also thin.
\end{proof}

Everything needed for Theorem~\ref{theorem:main} has now been proven.
Note that this theorem does not say anything about the
independence of \statec{n}\ from \statepc{n}\ in ZF when $n$ is
not prime.  We intend to consider the case when $n$ is not prime
elsewhere.

\end{section}
\begin{section}{Simpler models 
not useful for the main theorem} \label{simpler}

We consider a family of permutation models, some of which may on first
consideration seem to be promising candidates to witness
that \statepc{2} $\notarrow$ \statec{2}.  However, it will turn out that
\statepc{2} fails in every such model.

Let $\MM$ be a model of ZFAC whose set $U$ of atoms is countable and
infinite.  Let $\mathcal P=\setof{U_n}{n\in\omega}$ be a partition of 
$U$ into pairs.   Let $G$ be the group of permutations of $U$ (equivalently,
automorphisms of $\MM$) which fix each element of $\mathcal P$.  
Let $\II$ be some ideal on $\omega$.  For $A\in\II$ and $g\in G$, we say
$g$ \emph{fixes at} $A$ if $g$ fixes each element of
$\bigcup_{n\in A} U_n$.  Define \emph{support} and \emph{symmetric}
by analogy with the definitions of these terms
in the proof of the main theorem, and let $\NN$ be the permutation
submodel consisting of the hereditarily symmetric elements. 

If $\II$ is the ideal of finite subsets of $\omega$, then $\NN$ is
the ``second Fraenkel model.''  Clearly $\mathcal P$ has no
infinite partial choice function in the second Fraenkel model. 
Of course,
if $\II$ is any larger than the finite set ideal, then 
$\mathcal P$ does have an infinite partial choice 
function, and
it may be tempting to think that if $\II$ is well-chosen, then
perhaps \statepc{2} will hold in
the resulting model $\NN$.  However, we
will show how to produce a set $Z=\setof{X_n}{n\in\omega}$ of
pairs, countable in $\NN$, with no infinite partial choice 
function (no matter how $\II$ is chosen).

Notation: For sets $A$ and $B$, let $P(A,B)$ be the set of bijections
from $A$ to $B$.  We are interested in this when $A$ and $B$ are both 
pairs, in which case $P(A,B)$ is also a pair.

Let $X_0=A_0$.  For $i\in\omega$, let $X_{i+1}=P(X_i,A_{i+1})$.  
The empty set supports each pair $X_i$, so $Z=\setof{X_n}{n\in\omega}$ is a 
countable set in $\NN$.  
Let $S\in\II$;
we'll show that $S$ fails to support any infinite partial choice
function for $Z$. 
Let $i=\min(\omega\setmin S)$, and 
let $g\in G$ be the permutation which swaps the elements of $A_i$ and
fixes all other atoms, so $g\in G_{(S)}$. 
This $g$ fixes each element of $X_n$ for $n<i$, 
but swaps the elements of $X_i$.  By simple induction,
$g$ also swaps the elements of $X_n$ for all $n>i$.  
It follows that for any $C\in\MM$ which is an infinite partial
choice function for $Z$, $Cg\neq C$, and thus $S$ does not
support $C$.

\end{section}

\bibliographystyle{plain}
\bibliography{934,listb}

\end{document}